\documentclass[12pt, leqno]{amsart}
\usepackage{graphicx}
\usepackage{amsfonts,delarray,amssymb,amsmath,amsthm,a4,a4wide}
\usepackage{latexsym}
\usepackage{epsfig}
\usepackage{color}
%\usepackage[active]{srcltx}

% ----------------------------------------------------------------
\vfuzz2pt % Don't report over-full v-boxes if over-edge is small
\hfuzz2pt % Don't report over-full h-boxes if over-edge is small
% THEOREMS -------------------------------------------------------
\newtheorem{thm}{Theorem}[section]

\newtheorem{lem}[thm]{Lemma}
\newtheorem{prop}[thm]{Proposition}
\theoremstyle{definition}

\newtheorem{rem}[thm]{Remark}
%\numberwithin{equation}{section}
\numberwithin{equation}{section}
% MATH -----------------------------------------------------------
\newcommand{\norm}[1]{\left\Vert#1\right\Vert}
\newcommand{\abs}[1]{\left\vert#1\right\vert}

\newcommand{\R}{\mathbb R}

\newcommand{\e}{\varepsilon}
\newcommand{\eps}{\varepsilon}

\newcommand{\p}{\partial}

\newcommand{\comment}[1]{}

\newenvironment{myindentpar}[1]%
{\begin{list}{}%
         {\setlength{\leftmargin}{#1}}%
         \item[]%
}
{\end{list}}
% ----------------------------------------------------------------

\begin{document}

\title[The prescribed affine mean curvature and Abreu's equations]{Global second derivative estimates for the second boundary value problem of the prescribed affine mean curvature and Abreu's equations}
\author{Nam Q. Le}
\address{Department of Mathematics, Columbia University, New York, NY 10027, USA\\ and School of Engineering, Tan Tao University, Long An, Vietnam}
\email{\tt  namle@math.columbia.edu, nam.le@ttu.edu.vn}
\date{February 10, 2012}
%\address{}%
%\email{\tt  }%

%\thanks{The author was partially supported by NSF grant ....}
\subjclass[2000]{Primary 35J40, 35B65; Secondary 53A15}
\keywords{Affine mean curvature equation, Abreu's equation, linearized Monge-Amp\`ere equation, second boundary value problem, localization theorem}

%\date{}%
%\dedicatory{}%
%\commby{}%
% ----------------------------------------------------------------
\begin{abstract}
In this paper we prove the global second derivative estimates for the second boundary value problem of the prescribed affine mean curvature equation where the affine mean curvature is only assumed to be in $L^{p}$. Our result extends previous result by Trudinger and Wang in the case of globally bounded affine mean curvature and also covers Abreu's equation.

\end{abstract}
\maketitle

\section{Introduction and main results}
This paper is concerned with the global second derivative estimates for the second boundary value problem of the prescribed affine mean curvature equation. This equation is a fourth order fully nonlinear partial differential equation of the form
\begin{equation}
\label{AMCE}
L[u]:= U^{ij}w_{ij} =f\quad \text{in}~\Omega,
\end{equation}
where $u$ is a locally uniformly convex function defined in $\Omega\subset\R^{n}$, and throughout, $U = (U^{ij})$ is the matrix of cofactors of the 
Hessian matrix $D^{2}u$ of the convex function $u$, 
i.e., $$U = (\det D^{2} u) (D^{2} u)^{-1},$$and
\begin{equation}
w = [\det D^{2} u]^{\theta-1},~\theta<\frac{1}{n}.
\label{wu}
\end{equation}
The second boundary value problem for (\ref{AMCE}) is the Dirichlet problem for the system (\ref{AMCE})-(\ref{wu}), that is to prescribe
\begin{equation}
\label{SBV}
u=\varphi,~~~ w=\psi~~~\text{on}~\p\Omega.
\end{equation}

The problem (\ref{AMCE})-(\ref{SBV}) with $\theta=\frac{1}{n +2}$ was introduced by Trudinger and Wang in their investigation of the affine Plateau problem \cite{TW1} in affine geometry. In this context, the quantity
$$H_{A}[u]:= -\frac{1}{n+1} L[u]$$
is the {\it affine mean curvature} of the graph of $u$ \cite{Bl, NS, Si}. In particular, equation (\ref{AMCE}) with $f\equiv 0$ corresponds to the {\it affine maximal surface equation} \cite{TW}. The global second derivative estimates for (\ref{AMCE})-(\ref{SBV}) with $\theta\in (0, \frac{1}{n}]$ were established by Trudinger and Wang in \cite{TW2} under the assumption that $f\in L^{\infty}(\Omega)$. In particular, they proved
\begin{thm}(\cite[Theorem 1.2]{TW2})
\label{mainTW}
Let $\Omega$ be a uniformly convex domain in $\R^{n}$, with $\p\Omega\in C^{3,1}$, $\varphi \in C^{3,1}(\overline{\Omega}), \psi\in C^{3,1}(\overline{\Omega})$, $\inf_{\Omega}\psi>0$, $f\in L^{\infty}(\Omega)$, $f\leq 0$ and $\theta\in (0, \frac{1}{n}]$. Then, for uniformly convex solution $u\in C^{4}(\overline{\Omega})$ of (\ref{AMCE})-(\ref{SBV}), we have the estimate
\begin{equation}
\label{w4p}
\norm{u}_{W^{4,p}(\Omega)}\leq C,~~\text{for all}~1<p<\infty,
\end{equation}
where $C$ depends on $n, p, \theta, \p\Omega$, $\norm{f}_{L^{\infty}(\Omega)}$, $\norm{\varphi}_{C^{4}(\overline{\Omega})}, \norm{\psi}_{C^{4}(\overline{\Omega})}$, and $\inf_{\Omega} \psi.$
\end{thm}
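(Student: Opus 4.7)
The plan is to view (\ref{AMCE})--(\ref{SBV}) as a coupled system: the Monge-Amp\`ere equation $\det D^{2}u = w^{1/(\theta-1)}$, which recovers $u$ from $w$, and the linearized Monge-Amp\`ere equation $U^{ij}w_{ij}=f$, whose coefficients $U^{ij}$ are built from $u$. The target $W^{4,p}$ estimate for $u$ reduces, by twice-differentiating the Monge-Amp\`ere equation, to a $W^{2,p}$ estimate for $w$ together with a global $C^{2,\alpha}(\overline{\Omega})$ estimate for $u$; so the core of the proof is to pinch $\det D^{2}u$ between two positive constants up to $\p\Omega$, upgrade $u$ to $C^{2,\alpha}$, and then run a Calder\'on--Zygmund theory for the linearized Monge-Amp\`ere equation satisfied by $w$.

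First I would extract $C^{0}$ and $C^{1}$ bounds for $u$ from the boundary data $\varphi$, the convexity of $u$, and standard comparison. Next, because $(U^{ij})$ is positive definite and $f\leq 0$, the operator $L$ is elliptic and the maximum principle forces $w\geq \inf_{\p\Omega}\psi>0$, giving an upper bound on $\det D^{2}u$. The matching lower bound on $\det D^{2}u$ requires a boundary $C^{2}$ estimate for $u$: tangential second derivatives on $\p\Omega$ are controlled by differentiating $u=\varphi$ twice along $\p\Omega$, while the mixed and pure normal second derivatives are handled by barriers exploiting the uniform convexity of $\Omega$ and $\p\Omega\in C^{3,1}$; combined with $w=\psi$ on $\p\Omega$, this gives $c\leq \det D^{2}u\leq C$ on $\p\Omega$, and standard arguments for Monge-Amp\`ere propagate the two-sided bound into $\Omega$. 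With $\det D^{2}u$ pinched between two positive constants and the boundary data sufficiently regular, Caffarelli's interior and Savin's boundary $C^{2,\alpha}$ regularity for Monge-Amp\`ere yield $u\in C^{2,\alpha}(\overline{\Omega})$. Then $U^{ij}\p_{ij}$ is uniformly elliptic with $C^{\alpha}$ coefficients, and applying classical Calder\'on--Zygmund theory to $U^{ij}w_{ij}=f\in L^{\infty}$ with $w=\psi\in C^{3,1}$ on $\p\Omega$ gives $w\in W^{2,p}(\Omega)$ for every $p<\infty$. Finally, differentiating $\log\det D^{2}u=(\theta-1)^{-1}\log w$ twice produces, for each entry of $D^{2}u$, a second-order linear uniformly elliptic equation with $L^{p}$ data, from which $u\in W^{4,p}(\Omega)$.

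The step I expect to be hardest is the boundary second derivative bound for $u$. Unlike the classical Dirichlet problem for Monge-Amp\`ere, here $\det D^{2}u$ is not prescribed on $\p\Omega$ but only determined implicitly through $w=\psi$ and the fourth-order coupling, so Caffarelli--Nirenberg--Spruck-type barrier constructions must be adapted to this setting. The strict positivity $\inf_{\Omega}\psi>0$, the uniform convexity of $\Omega$, and the $C^{3,1}$ regularity of $\p\Omega$, $\varphi$, and $\psi$ all enter crucially here to quantify the modulus of strict convexity of $u$ up to $\p\Omega$ before the interior linearized-Monge-Amp\`ere bootstrap can be engaged.
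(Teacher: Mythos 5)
There is a genuine gap, and it is at the heart of the theorem. You write: ``With $\det D^{2}u$ pinched between two positive constants and the boundary data sufficiently regular, Caffarelli's interior and Savin's boundary $C^{2,\alpha}$ regularity for Monge-Amp\`ere yield $u\in C^{2,\alpha}(\overline{\Omega})$.'' This is false. The two-sided bound $\lambda\leq \det D^{2}u\leq\Lambda$ gives strict convexity and $C^{1,\alpha}$ regularity, but not $C^{2,\alpha}$; Caffarelli's interior $C^{2,\alpha}$ theory and Savin's boundary version both require the Monge-Amp\`ere right-hand side to be H\"older continuous, not merely bounded between two constants (with only $L^{\infty}$ control, $D^{2}u$ can fail to be bounded). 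Because this step fails, the subsequent ``classical Calder\'on--Zygmund theory'' step also fails: until $u\in C^{2,\alpha}$ the cofactor matrix $U=(U^{ij})$ is \emph{not} uniformly elliptic — only its determinant is controlled, not its eigenvalues — so you cannot apply standard elliptic $W^{2,p}$ theory to $U^{ij}w_{ij}=f$. Your plan thus has a circularity: you need $u\in C^{2,\alpha}$ to make $U^{ij}$ nice coefficients, but you need $w=(\det D^2 u)^{\theta-1}\in C^{\alpha}$ to get $u\in C^{2,\alpha}$, and you never prove $w\in C^{\alpha}$.

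The missing idea is exactly what makes this theorem substantial. One must first establish a global H\"older estimate for $w$ \emph{in the degenerate/singular regime}, i.e.\ when the only information on $U^{ij}$ is that $\lambda\leq\det D^{2}u\leq\Lambda$. This is done with the Caffarelli--Guti\'errez interior H\"older theory (working with sections of $u$ rather than Euclidean balls) coupled to a boundary analysis. As the paper's introduction summarizes, Trudinger and Wang first prove boundary Lipschitz continuity of $w$, use this plus convexity/perturbation arguments to show the boundary sections of $u$ are of controlled shape, then run the Caffarelli--Guti\'errez argument to obtain $w\in C^{\alpha}(\overline{\Omega})$; only after this does the Monge-Amp\`ere $C^{2,\alpha}$ estimate of \cite[Theorem 1.1]{TW2} apply, followed by the Schauder/Calder\'on--Zygmund bootstrap. (The present paper replaces their boundary-section analysis with Savin's Localization Theorem for the more general $f\in L^{p}$ case.) A secondary issue: your route to the lower bound on $\det D^{2}u$ (via boundary $C^{2}$ barriers for $u$) is not how it is done; the global upper bound on $w$ follows from the ABP estimate applied to $U^{ij}w_{ij}=f$ together with the algebraic relation $\det U=(\det D^{2}u)^{n-1}=w^{(n-1)/(\theta-1)}$ and the restriction $\theta<1/n$, which makes the resulting inequality self-improving (cf.\ Lemma \ref{wbound-lem}). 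You therefore identify the wrong step as the hard one: the boundary $C^{2}$ estimate for $u$ is not used, while the global H\"older estimate for $w$, which you skip, is the crux.
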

It is very natural to ask if the estimate (\ref{w4p}) holds when $f$ is only assumed to be in $L^{p}$. In this paper, we answer this question for the case $p> n$. Our first main theorem provides global derivative estimates for solutions of the prescribed affine mean curvature equation.
\begin{thm}
\label{mainthm}
Let $p> n$ and let $\Omega$ be a uniformly convex domain in $R^{n}$, with $\p\Omega\in C^{3,1}$, $\varphi \in W^{4,p}(\Omega), \psi\in W^{2,p}(\Omega)$, $\inf_{\Omega}\psi>0$ and $f\in L^{p}(\Omega)$, $f\leq 0.$ Then, for uniformly convex solution $u\in C^{4}(\overline{\Omega})$ of (\ref{AMCE})-(\ref{SBV}), we have the estimate
\begin{equation}
\label{global-est}
\norm{u}_{W^{4,p}(\Omega)}\leq C,
\end{equation}
where $C$ depends on $n, p, \theta, \p\Omega, \Omega$, $\norm{f}_{L^{p}(\Omega)}$, $\norm{\varphi}_{W^{4,p}(\Omega)}, \norm{\psi}_{W^{2,p}(\Omega)}$, and $\inf_{\Omega} \psi.$
\end{thm}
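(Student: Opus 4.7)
The overall plan is to treat equation \eqref{AMCE} as a linearized Monge-Amp\`ere (LMA) equation $U^{ij}w_{ij}=f$ for the auxiliary function $w$, and then use boundary regularity theory for LMA together with Savin's boundary localization for Monge-Amp\`ere to promote $w$ from $L^{\infty}$ to H\"older continuous and finally to $W^{2,p}$. Because $w=[\det D^{2}u]^{\theta-1}$ with $\theta-1<0$, these gains translate into corresponding bounds on $\det D^{2}u$, from which the desired $W^{4,p}$ estimate on $u$ follows via the Monge-Amp\`ere regularity theory.

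The first step is to obtain two-sided positive bounds on $w$. Since $f\le 0$, one has $U^{ij}w_{ij}\le 0$, so the maximum principle for the operator $U^{ij}\p_{ij}$ forces $w\ge \inf_{\p\Omega}\psi>0$ throughout $\Omega$, while a comparison with a suitable barrier built from $\psi$ provides the upper bound. In view of $\theta-1<0$, these translate into two-sided positive bounds on $\det D^{2}u$. Combined with $\p\Omega\in C^{3,1}$, $\varphi\in W^{4,p}$, and the uniform convexity of $\Omega$, this places us in the setting of Savin's boundary localization theorem for $u$, which pins down the tangential shape of sections of $u$ centered on $\p\Omega$ and is the prerequisite for every boundary estimate for the linearized operator.

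The main new step, and the principal obstacle relative to Theorem \ref{mainTW}, is a boundary H\"older estimate for the linearized Monge-Amp\`ere equation with right-hand side merely in $L^{p}$. Using the boundary localization above together with interior/boundary Harnack-type H\"older estimates for $U^{ij}\p_{ij}$ in normalized sections tangent to $\p\Omega$, in which the $L^{p}$ norm of $f$ now appears in place of $\|f\|_{L^{\infty}}$ (this is precisely where $p>n$ is needed, via a Krylov-Safonov/Aleksandrov type bound), one obtains $w\in C^{\alpha}(\ov\Omega)$ for some $\alpha\in(0,1)$ depending on $n,p,\theta$ and the data. Since $w$ is bounded away from $0$ and $\infty$, this yields $\det D^{2}u\in C^{\alpha}(\ov\Omega)$, and the boundary $C^{2,\alpha}$ theory of Caffarelli/Savin for the Monge-Amp\`ere equation (together with $\varphi\in W^{4,p}\hookrightarrow C^{2,\alpha}$) produces $u\in C^{2,\alpha}(\ov\Omega)$.

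Once $u\in C^{2,\alpha}(\ov\Omega)$, the coefficients $U^{ij}$ are H\"older continuous and $U^{ij}\p_{ij}$ is uniformly elliptic on $\ov\Omega$. Classical global Calder\'on-Zygmund theory applied to $U^{ij}w_{ij}=f\in L^{p}$ with boundary data $w=\psi\in W^{2,p}$ then gives $w\in W^{2,p}(\Omega)$, which, using $\theta-1<0$ and the two-sided bounds on $\det D^{2}u$, upgrades to $\det D^{2}u\in W^{2,p}(\Omega)$. A final application of the $W^{4,p}$ regularity theory for the Monge-Amp\`ere equation with $\varphi\in W^{4,p}(\Omega)$ yields \eqref{global-est}. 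The decisive technical difficulty throughout is the boundary H\"older estimate for LMA with only an $L^{p}$ right-hand side; this is exactly the point at which the $L^{\infty}$ hypothesis of \cite{TW2} must be replaced by a sharper tool drawn from the modern theory of the linearized Monge-Amp\`ere operator.
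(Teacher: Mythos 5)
Your proposal follows essentially the same route as the paper: two-sided bounds on $w$ give two-sided bounds on $\det D^{2}u$; a global boundary H\"older estimate for the linearized Monge-Amp\`ere operator $U^{ij}\p_{i}\p_{j}$ with right-hand side in $L^{n}$ (obtained from Savin's boundary localization theorem combined with the Caffarelli--Guti\'errez interior estimates) shows $w\in C^{\alpha}(\overline{\Omega})$; then $\det D^{2}u=w^{1/(\theta-1)}\in C^{\alpha}(\overline{\Omega})$, the global $C^{2,\alpha}$ theory of Trudinger--Wang gives $u\in C^{2,\alpha}(\overline{\Omega})$, the equation becomes uniformly elliptic with continuous coefficients, and Calder\'on--Zygmund theory produces $w\in W^{2,p}$ and hence $u\in W^{4,p}$. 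You have correctly identified the global boundary H\"older estimate for the linearized operator as the crux.

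One step is glossed over in a way that would break the argument if taken at face value: the upper bound on $w$. You write that it comes from ``a comparison with a suitable barrier built from $\psi$,'' but a barrier built from $\psi$ alone cannot absorb the nonzero right-hand side $f$, and any comparison must go through the Aleksandrov--Bakelman--Pucci estimate for the degenerate operator $U^{ij}\p_{i}\p_{j}$. The ABP estimate is genuinely self-referential here: it gives
$$\norm{w}_{L^{\infty}(\Omega)}\leq \norm{\psi}_{L^{\infty}(\partial\Omega)}+C\norm{\frac{f}{(\det U)^{1/n}}}_{L^{n}(\Omega)},$$
and since $\det U=(\det D^{2}u)^{n-1}=w^{(n-1)/(\theta-1)}$, the right-hand side reintroduces a power of $\norm{w}_{L^{\infty}}$. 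The estimate closes only because that power, $\frac{n-1}{n(1-\theta)}$, is strictly less than $1$ --- which is precisely where the standing hypothesis $\theta<\frac{1}{n}$ is used. Without making this sublinear fixed-point structure explicit, the upper bound does not follow, so this point deserves to be spelled out rather than attributed to a generic barrier.

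Otherwise the plan is sound, and your choice to cite a Caffarelli/Savin $C^{2,\alpha}$ boundary theory in place of the Trudinger--Wang version is a cosmetic difference, not a mathematical one.
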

The integrability of $f$ in Theorem \ref{mainthm} is optimal for the global estimate (\ref{global-est}).\\
By using degree theory as in \cite{TW2}, we obtain the following result as a consequence of Theorem \ref{mainthm}.
\begin{thm}
Let $p>n$ and let $\Omega$ be a uniformly convex domain in $R^{n}$, with $\p\Omega\in C^{3,1}$, $\varphi \in W^{4,p}(\Omega), \psi\in W^{2,p}(\Omega)$, $\inf_{\Omega}\psi>0$ and $f\in L^{p}(\Omega)$, $f\leq 0.$ Then, there is a unique uniformly convex solution $u\in W^{4,p}(\Omega)$ to the boundary value problem (\ref{AMCE})-(\ref{SBV}).
\end{thm}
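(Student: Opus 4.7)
The plan is to combine Theorem~\ref{mainthm} with the smooth existence result Theorem~\ref{mainTW} of Trudinger--Wang, via a standard smooth-approximation argument, to obtain existence, and to deduce uniqueness from a comparison argument for the linearized problem.

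For existence, first approximate the data: choose $f_m \in C^{\infty}(\overline{\Omega})$ with $f_m \leq 0$ and $f_m \to f$ in $L^{p}(\Omega)$; $\varphi_m \in C^{\infty}(\overline{\Omega})$ with $\varphi_m \to \varphi$ in $W^{4,p}(\Omega)$; and $\psi_m \in C^{\infty}(\overline{\Omega})$ with $\psi_m \to \psi$ in $W^{2,p}(\Omega)$ and $\inf_{\Omega} \psi_m \geq \tfrac{1}{2} \inf_{\Omega} \psi$ for $m$ large. Theorem~\ref{mainTW} produces uniformly convex solutions $u_m \in C^{4}(\overline{\Omega})$ of the corresponding smooth second boundary value problem. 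Since the $W^{4,p}$, $W^{2,p}$, and $L^{p}$ norms of the approximating data are controlled by those of the original data, Theorem~\ref{mainthm} yields a bound $\norm{u_m}_{W^{4,p}(\Omega)} \leq C$ uniform in $m$. Because $p > n$, the embedding $W^{4,p}(\Omega) \hookrightarrow C^{3,\alpha}(\overline{\Omega})$ with $\alpha = 1 - n/p$ is compact, so a subsequence converges in $C^{3,\alpha}(\overline{\Omega})$ and weakly in $W^{4,p}(\Omega)$ to some $u \in W^{4,p}(\Omega)$. The limit is uniformly convex (the uniform $C^{3,\alpha}$ bound combined with the positive lower bound on $\psi$ keeps $\det D^{2} u_m$ bounded away from zero), so one may pass to the limit in the equation $U^{ij} w_{ij} = f$ almost everywhere and in the boundary conditions using $C^{3,\alpha}$ convergence up to $\partial\Omega$.

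For uniqueness, suppose $u_1, u_2 \in W^{4,p}(\Omega)$ are two solutions, set $v = u_1 - u_2$, and consider the convex combination $u_s = s u_1 + (1-s) u_2$ for $s \in [0,1]$, which remains uniformly convex. Writing $w_1 - w_2$ and the difference of the cofactor matrices as integrals of $\tfrac{d}{ds}$ along this path, one arrives at a linear fourth-order equation for $v$ whose leading part is the linearized Monge--Amp\`ere operator associated to an averaged positive definite matrix, together with the homogeneous boundary conditions $v = 0$ and $w_1 - w_2 = 0$ on $\partial\Omega$. A maximum-principle or energy argument, using the sign condition $f \leq 0$ and ellipticity of the linearization, then forces $v \equiv 0$.

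The main obstacle I anticipate is this uniqueness step: the boundary condition prescribes $w$ rather than a normal derivative of $u$, so the classical Monge--Amp\`ere comparison principle does not apply directly to the fourth-order system, and one must follow the uniqueness argument of \cite{TW2} carefully and verify that it extends to the present $W^{4,p}$ regularity. By contrast, the existence half is essentially compactness once Theorem~\ref{mainthm} is in hand.
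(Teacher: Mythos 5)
The paper does not give a detailed proof here: it simply invokes degree theory as in \cite{TW2}, with the a priori estimate of Theorem~\ref{mainthm} supplying the compactness needed for the degree argument, and leaves uniqueness to the same reference. Your route is genuinely different in structure -- you propose to reduce to the smooth case by approximating the data, invoke a smooth existence theorem, and then pass to the limit using the uniform $W^{4,p}$ bound from Theorem~\ref{mainthm} -- and it is a reasonable alternative, since both approaches ultimately hinge on the same a priori estimate. However, there are two concrete points to fix.

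First, in the existence half you write that ``Theorem~\ref{mainTW} produces uniformly convex solutions $u_m$,'' but Theorem~\ref{mainTW} as stated in this paper is an \emph{a priori estimate}, not an existence theorem: it assumes a $C^{4}$ uniformly convex solution is already given. To manufacture the $u_m$ you must cite the actual existence result of Trudinger--Wang for smooth data (which is proved in \cite{TW2} by degree theory using their estimate), not the estimate itself. Once that is corrected, the approximation and compactness steps are sound: Lemma~\ref{wbound-lem} gives a two-sided bound on $\det D^{2} u_m$ depending only on $\|f_m\|_{L^{n}}$, $\|\psi_m\|_{L^\infty}$ and $\inf \psi_m$, so the limit is indeed uniformly convex, and the $W^{4,p}\hookrightarrow C^{3,\alpha}$ embedding lets you pass to the limit in the equation pointwise.

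Second, the uniqueness step is not yet a proof, and you flag this yourself. The scheme you sketch -- linearizing along $u_s = s u_1 + (1-s) u_2$ -- can be made to work, but the essential structural fact you need to record is that $L[u]=U^{ij}w_{ij}$ with $w=(\det D^{2}u)^{\theta-1}$, $\theta<1/n$, is the Euler--Lagrange operator of a \emph{strictly concave} functional of the Hessian (namely $\theta^{-1}\int_\Omega (\det D^{2}u)^{\theta}$ for $\theta\ne 0$, or $\int_\Omega \log\det D^{2}u$ for $\theta=0$); strict concavity together with $f\le 0$ and the two boundary conditions $u=\varphi$, $w=\psi$ is what forces $u_1\equiv u_2$. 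Without invoking this concavity (or an equivalent integral inequality), the bare statement that ``a maximum-principle or energy argument forces $v\equiv 0$'' does not close the argument, precisely because the boundary condition fixes $w$ rather than $\partial_\nu u$, as you note. So the uniqueness half is the genuine gap; the existence half is essentially correct modulo the mis-attribution of the smooth existence step.
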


For (\ref{AMCE}), the condition $f\leq 0$ in the case of $\theta=\frac{1}{n+2}$ corresponds to nonnegative affine mean curvature. Our theorem also covers the case $\theta =0$ which was also treated by Zhou \cite{Zh}. In \cite{Zh}, $f$ is bounded and $w$ is constant near the boundary $\p\Omega$. When $\theta=0$, (\ref{AMCE}) is known as Abreu's equation \cite{Ab} in the context of existence of K\"ahler metric of constant scalar curvature \cite{D1, D2, D3, D4}. \\

 Our proof of Theorem \ref{mainthm} is based on 
\begin{myindentpar}{1cm}
(i) the global $C^{2,\alpha}$ estimates for the Monge-Amp\`ere equation \cite{TW2} when the Monge-Amp\`ere measure is only assumed to be H\"older continuous, and \\
(ii) the global H\"older continuity estimates for solutions of the linearized Monge-Amp\`ere equations which are of independent interest.
\end{myindentpar}
We state these global H\"older estimates in the following theorem.
\begin{thm}
\label{global-h}
Let $\Omega$ be a bounded, uniformly convex domain in $\R^{n}$ with $\p\Omega\in C^{3}$. Let $u: \overline{\Omega}\rightarrow \R$, $u\in C^{0,1}(\overline{\Omega})\cap C^{2}(\Omega)$  be a convex function satisfying
$$0<\lambda\leq \det D^{2}u\leq \Lambda<\infty$$
and
$$u\mid_{\p\Omega}\in C^{3}.$$
 Let $v$ be the continuous solution to the linearized Monge-Amp\`ere equation $$U^{ij} v_{ij} = g\quad \text{in}\quad \Omega; v=\varphi\quad \text{on}\quad \partial\Omega.$$ Assume that
$$\|\varphi\|_{L^{\infty}(\Omega)} + \|g\|_{L^{n}(\Omega)}\leq K$$
and $\varphi$ is $C^{\alpha}$ on $\partial\Omega$, namely,
$$|\varphi(x)-\varphi(y)|\leq L |x-y|^{\alpha}~\text{for}~x, y\in\partial\Omega.$$ Then, $v\in C^{\beta}(\overline{\Omega})$ where $\beta$ depends only on $\lambda, \Lambda, n, K, L, \alpha$, $diam (\Omega)$, and the uniform convexity of $\Omega.$ 
\end{thm}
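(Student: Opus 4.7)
The plan is to combine interior H\"older regularity from the Caffarelli--Guti\'errez theory for the linearized Monge--Amp\`ere equation with a boundary oscillation argument driven by Savin's boundary localization theorem. Throughout, write $L:=U^{ij}\p_{ij}$ and $\ell_{x_{0}}$ for the tangent plane of $u$ at $x_{0}\in\p\Omega$.

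I would first record the global $L^{\infty}$ bound $\|v\|_{L^{\infty}(\Omega)}\le\|\varphi\|_{L^{\infty}(\p\Omega)}+C\|g\|_{L^{n}(\Omega)}\le C(K)$, which follows from the Aleksandrov--Bakelman--Pucci maximum principle applied to $L$: since $\det(U^{ij})=(\det D^{2}u)^{n-1}$ lies between positive constants, ABP is available with $L^{n}$ forcing. Next, on any subdomain $\Omega'\Subset\Omega$, the Caffarelli--Guti\'errez H\"older estimate for solutions of $U^{ij}v_{ij}=g$ with $g\in L^{n}$ (valid because $\lambda\le\det D^{2}u\le\Lambda$) yields $\|v\|_{C^{\alpha_{0}}(\ov{\Omega'})}\le C$, where the constant depends on $\mathrm{dist}(\Omega',\p\Omega)$, $\lambda$, $\Lambda$, $n$, and $K$. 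The whole problem is thus reduced to producing a uniform H\"older modulus of continuity at each boundary point.

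For the boundary estimate, fix $x_{0}\in\p\Omega$ and study the sections $S_{h}(x_{0}):=\{x\in\ov{\Omega}:u(x)<\ell_{x_{0}}(x)+h\}$. Under the present hypotheses (uniform convexity of $\Omega$, $\p\Omega\in C^{3}$, $u|_{\p\Omega}\in C^{3}$, and two-sided bounds on $\det D^{2}u$), Savin's boundary localization theorem provides, for each small $h>0$, an affine map $T_{h}$ with $\det T_{h}=1$ such that $T_{h}\bigl(S_{h}(x_{0})\bigr)$ is comparable to a half-ball of radius $\sqrt{h}$, with eccentricity controlled only logarithmically in $h$. After rescaling, $L$ transforms into a linearized Monge--Amp\`ere operator associated to a normalized convex function on a domain equivalent to the half-ball, so the Caffarelli--Guti\'errez interior theory applies uniformly on these rescaled sections.

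The plan is then a dyadic iteration on $h$: using the H\"older estimate in the rescaled section, together with ABP applied to the rescaled forcing and the prescribed $C^{\alpha}$ behaviour of $\varphi$ on $\p\Omega\cap S_{h}(x_{0})$, I expect an oscillation-decay inequality of the form
\[
\mathrm{osc}_{S_{h/2}(x_{0})\cap\Omega}(v)\le \theta\,\mathrm{osc}_{S_{h}(x_{0})\cap\Omega}(v)+C\bigl(L h^{\alpha/2}+K h^{\gamma}\bigr)
\]
for some $\theta\in(0,1)$ and some $\gamma>0$ depending only on $\lambda,\Lambda,n$. Iterating over dyadic scales $h=2^{-k}h_{0}$, and using the inclusion $B_{ch^{1/2}}(x_{0})\cap\Omega\subset S_{h}(x_{0})$ (a consequence of the uniform convexity of $\Omega$ and the $C^{3}$ bound on $u|_{\p\Omega}$), yields $|v(x)-\varphi(x_{0})|\le C|x-x_{0}|^{\beta}$ for some $\beta>0$ depending only on the prescribed quantities. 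Patching this pointwise boundary estimate with the interior H\"older bound in a standard way gives the global $C^{\beta}(\ov\Omega)$ estimate. The main obstacle is precisely this boundary oscillation-decay step: $L$ degenerates at $\p\Omega$ (the cofactors $U^{ij}$ blow up tangentially), so Krylov--Safonov does not apply directly, and Savin's localization theorem is what makes the degenerate problem affinely equivalent, at every scale, to a normalized problem where the Caffarelli--Guti\'errez theory is available.
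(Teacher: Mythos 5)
The overall architecture you describe (interior Caffarelli--Guti\'errez H\"older regularity, plus a boundary estimate, patched together via Savin's localization to normalize sections near $\p\Omega$) is the same as the paper's, but your mechanism for the boundary estimate is different from the paper's and, as stated, has a genuine gap.

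The paper's boundary estimate (Proposition \ref{global-holder}) is a one-shot barrier argument: taking $x_{0}\in\p\Omega$, it compares $v$ with the explicit functions $v(x_{0})\pm\e\pm C\delta_{2}^{-2}x_{n}$ in $\Omega\cap B_{\delta_{2}}(x_{0})$ and invokes ABP; the only structural hypothesis needed is $\det(a_{ij})\ge\lambda$, so the step needs nothing about the Monge--Amp\`ere geometry of sections and, in particular, does not use Savin's theorem at all. Your proposal instead asserts an oscillation-decay inequality
$\mathrm{osc}_{S_{h/2}(x_{0})}(v)\le\theta\,\mathrm{osc}_{S_{h}(x_{0})}(v)+C(Lh^{\alpha/2}+Kh^{\gamma})$
at boundary sections $S_{h}(x_{0})$ with $x_{0}\in\p\Omega$. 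You say you ``expect'' this from the interior C--G estimate in the rescaled section plus ABP plus the H\"older modulus of $\varphi$, but you never identify where the contraction factor $\theta<1$ comes from. Interior Caffarelli--Guti\'errez gives oscillation decay only on compactly contained subsections; it says nothing about $v$ on the part of $\p S_{h}(x_{0})$ lying strictly inside $\Omega$ (where $v$ is not prescribed), and ABP alone does not produce a factor $\theta<1$. Closing this loop would require a boundary Harnack or weak-Harnack estimate for the linearized Monge--Amp\`ere operator in half-sections --- a substantially harder result, not available from the ingredients you cite, and precisely what the paper's barrier argument is designed to avoid. The paper then applies C--G not to boundary sections but to \emph{maximal interior sections} $S_{y,\bar h}$ tangent to $\p\Omega$ (Proposition \ref{tan_sec}), which are genuinely interior and hence squarely in the domain of the C--G theory, feeding the boundary barrier estimate in as the $L^{\infty}$ input.

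Two secondary issues. First, your claimed inclusion $B_{ch^{1/2}}(x_{0})\cap\Omega\subset S_{h}(x_{0})$ is not what Savin's localization gives: the ellipsoid $E_{h}$ has eccentricity of order $|\log h|$, so the largest Euclidean ball one can insert has radius $\sim h^{1/2}/|\log h|$, and this logarithmic loss must be carried through the patching step (as the paper does). Second, the quadratic separation of $u$ from its tangent planes on $\p\Omega$ --- needed to invoke Savin's theorem at all --- is a nontrivial consequence of $\p\Omega\in C^{3}$, $u|_{\p\Omega}\in C^{3}$ and the pinching of $\det D^{2}u$ (Proposition \ref{quadsep}), and should be stated rather than absorbed into ``$C^{3}$ bound on $u|_{\p\Omega}$.''
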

\begin{rem}
Several remarks are in order.
\begin{myindentpar}{1cm} 
(1) Theorem \ref{global-h} is the global counterpart of Caffarelli-Guti\'errez's interior H\"older estimates for solutions to the linearized Monge-Amp\`ere equations \cite{CG}.\\
(2) If $g\in L^{\infty}(\Omega)$, $\varphi\in C^{1,1}(\partial\Omega)$ and $\det D^{2}u\in C(\overline{\Omega})$ then Theorem \ref{global-h} is a consequence of the global $C^{1,\alpha}$ estimates for solutions to the linearized Monge-Amp\`ere equations obtained in \cite{LS1}. See Theorem 2.5 and Remark 7.1 in that paper. The point of interest in Theorem \ref{global-h} lies in the fact that the data are less regular.
\end{myindentpar}
\end{rem}

Under the assumptions of Theorem \ref{global-h}, the linearized Monge-Amp\`ere operator $L_u:= U^{ij}\p_{i}\p_{j}$ is in general not uniformly elliptic, i.e., the eigenvalues of $U = (U^{ij})$ are not necessarily bounded away from $0$ and $\infty.$ Moreover, $L_u$ can be possibly singular near the boundary; even if $\det D^{2} u$ is constant in $\overline{\Omega}$, $U$ can blow up logarithmically at the boundary; see \cite[Proposition 2.6]{LS1}. As in \cite{LS1}, the degeneracy and singularity of $L_u$ are the main difficulties in proving Theorem \ref{global-h}. We handle the degeneracy of $L_u$ by working as in \cite{CG} with sections of solutions to the Monge-Amp\`ere equations. These sections have the same role as Euclidean balls have in the classical theory. To overcome the singularity of $L_u$ near the boundary, we use a Localization Theorem at the boundary for solutions to the Monge-Amp\`ere equations which was obtained by Savin in \cite{S,S2}.\\

We would like to comment on the difference between the method of the proof of Theorem \ref{mainTW} in \cite{TW2} and our method of the proof of Theorem \ref{mainthm}. To prove Theorem \ref{mainTW}, Trudinger and Wang first established the Lipschitz continuity of $w$ at the boundary $\p\Omega$; precisely
$$\abs{w(x)-w(x_{0})}\leq C\abs{x-x_{0}}~~\forall x\in \Omega, x_{0}\in\p\Omega.$$
They then used this property together with convexity analysis and perturbation arguments in the spirit of Caffarelli \cite{C2} for solutions of the Monge-Amp\`ere equation to conclude that the boundary sections of $u$ are of good shape; that is, each of these sections lies between two concentric balls whose radii ratio is under control. After that, they were able to apply the arguments of Caffarelli-Guti\'errez \cite{CG} for linearized Monge-Amp\`ere equation to conclude that $w$ is globally H\"older continuous. Finally, the global estimates for $u$ follows from their global $C^{2,\alpha}$ estimates for solutions of Monge-Amp\`ere equation \cite[Theorem 1.1]{TW2}.

Our method of the proof of Theorem \ref{mainthm} is slightly different. When $f$ is only assumed to be in $L^{p}$ $(p>n)$, we can only prove that $w$ is H\"older continuous at the boundary; precisely, for some $\alpha\in (0,1)$,
\begin{equation}\abs{w(x)-w(x_{0})}\leq C\abs{x-x_{0}}^{\alpha}~~\forall x\in \Omega, x_{0}\in\p\Omega.
\label{w-h}
\end{equation}
However, this is enough for us to establish the global H\"older continuity for $w$. To do this, we first use the Localization theorem for solutions of the Monge-Amp\`ere equation proved by Savin \cite{S} to obtain some mild control on the boundary sections of $u$. Precisely, each of these sections lies between two concentric balls whose radii ratio behaves like logarithm of the distance from their center to the boundary. These controls are {\it independent} of the boundary behavior of $w$. They just depend on the bounds on $w$ and information of $u$ on the boundary. The use of the Localization theorem in combination with Caffarelli-Guti\'errez's interior H\"older estimates gives us global H\"older continuity of $w$. \\

In this paper, we denote by $c, C, C_{0}, C_{1}, C_{2}, C_{n}, etc...$ positive constant depending on structural information, such as $n, p, \theta, \p\Omega, \Omega$, $\norm{f}_{L^{p}(\Omega)}, \norm{\varphi}_{W^{4,p}(\Omega)}, \norm{\psi}_{W^{2,p}(\Omega)}$, and $\inf_{\Omega} \psi$ and their values may change from line to line whenever there is no possibility of confusion. We refer to such constants as {\it universal constants}.\\

The rest of the paper is organized as follows. In Section \ref{global-CG}, we establish global H\"older continuity for solutions to the linearized Monge-Amp\`ere equations, thus proving Theorem \ref{global-h}. In Section \ref{global-AMCE}, we prove Theorem \ref{mainthm} about the global second derivative estimates for the second boundary value problem of the prescribed affine mean curvature equation.

{\bf Acknowledgments.} {\it The author would like to thank Neil Trudinger for his interest, support and encouragement during the preparation of this paper. The author also acknowledges the financial support and hospitality of the Center of Mathematics and its Applications, the Australian National University, Canberra, Australia where this work was completed during his stay there as a Visiting Fellow.}

\section{Global H\"older continuity for solutions to the linearized Monge-Amp\`ere equations}
\label{global-CG}
In this section, we will prove Theorem \ref{global-h}. Throughout, we assume that $\Omega$ is a uniformly convex domain. 
\subsection{Boundary H\"older continuity for solutions of non-uniformly elliptic equations}
The result in this subsection establishes boundary H\"older continuity for solutions to non-uniformly elliptic, linear equations without lower order terms. It is a refinement of Proposition 4.14 in \cite{CC}; see also Lemma 4.3 in \cite{GN1}. It states as follows.
\begin{prop}\label{global-holder}
Let $v$ be the continuous solution to the equation $Lv:= a_{ij} v_{ij} = g$ in $\Omega$ with $v=\varphi$ on $\partial\Omega$. Here the matrix $(a_{ij})$ is assumed to be measurable, positive definite and satisfies $\det (a_{ij})\geq \lambda.$ We assume that
$$\|\varphi\|_{L^{\infty}(\Omega)} + \|g\|_{L^{n}(\Omega)}\leq K$$
and $\varphi$ is $C^{\alpha}$ on $\partial\Omega$, namely,
$$|\varphi(x)-\varphi(y)|\leq L |x-y|^{\alpha}~\text{for}~x, y\in\partial\Omega.$$ Then, there exist $\delta, C$ depending only on $\lambda, n, K, L, \alpha$, $diam (\Omega)$, and the uniform convexity of $\Omega$ so that, for any $x_{0}\in\partial\Omega$, we have
$$|v(x)-v(x_{0})|\leq C|x-x_{0}|^{\frac{\alpha}{\alpha +2}},~\forall x\in \Omega\cap B_{\delta}(x_{0}). $$
\end{prop}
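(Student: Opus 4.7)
Translate so that $x_0 = 0$ and $\varphi(x_0) = 0$, so that $|\varphi(y)| \leq L|y|^\alpha$ on $\partial\Omega$, and set $\beta := \alpha/(\alpha+2)$. The goal is to establish $|v(x)| \leq C|x|^\beta$ for $x \in \Omega$ close to $0$. Fix such an $x$, write $r := |x|$, and choose an intermediate scale $R \geq r$ (to be optimized later) with corresponding subdomain $\Omega_R := \Omega \cap B_R(0)$. On $\partial\Omega_R$ the data are controlled: $|v| \leq LR^\alpha$ on $\partial\Omega \cap \overline{B_R}$ by the $C^\alpha$-continuity of $\varphi$, and $|v| \leq K$ on $\Omega \cap \partial B_R$ by the assumed $L^\infty$ bound. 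The uniform convexity of $\Omega$ furnishes an exterior tangent ball $B_{\rho_0}(y_0) \subset \Omega^c$ at $0$, yielding the quadratic barrier $\psi(y) := |y - y_0|^2 - \rho_0^2$, which is nonnegative on $\overline{\Omega}$, vanishes at $0$, grows linearly along the inward normal (so $\psi(x) \leq Cr$), and satisfies $\psi \geq cR^2$ on $\Omega \cap \partial B_R$ for $R \leq \rho_0$.

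Using these ingredients, I would take the upper barrier
\[
\Phi_R^+(y) := LR^\alpha + \frac{MK}{R^2}\,\psi(y),
\]
with $M$ absolute so that $\Phi_R^+ \geq v$ on all of $\partial\Omega_R$, together with a symmetric lower barrier $\Phi_R^-(y) := -LR^\alpha - (MK/R^2)\psi(y)$ satisfying $\Phi_R^- \leq v$ on $\partial\Omega_R$. The Alexandrov--Bakelman--Pucci maximum principle applied to $u := v - \Phi_R^+$ in $\Omega_R$, using only $\det(a_{ij}) \geq \lambda$ via the contact-set form together with the sharp Alexandrov inequality, gives a comparison $v \leq \Phi_R^+ + CR\lambda^{-1/n}\|g\|_{L^n(\Omega_R)}$ (plus a lower-order error), and analogously from below. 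Evaluating at $y = x$, using $\psi(x) \leq Cr$:
\[
|v(x)| \leq LR^\alpha + \frac{CKr}{R^2} + CR\lambda^{-1/n}\|g\|_{L^n(\Omega_R)}.
\]
Optimizing by the choice $R = r^{1/(\alpha+2)}$ balances the first two terms, each becoming of order $r^\beta$, while the $g$-term is of lower order; this yields the desired Hölder estimate $|v(x)| \leq Cr^\beta$ at the boundary point $x_0$.

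\textbf{Main obstacle.} The essential difficulty, and the refinement beyond \cite[Proposition 4.14]{CC} and \cite[Lemma 4.3]{GN1}, is the lack of an upper ellipticity bound on the coefficients: only $\det(a_{ij}) \geq \lambda$ is available. Consequently $L\Phi_R^+ = (2MK/R^2)\operatorname{tr}(a_{ij})$ is not pointwise bounded, and the classical pointwise comparison argument breaks down. I would instead rely on the sharp Alexandrov form of ABP, which couples to the ellipticity only via the AMGM inequality $\operatorname{tr}\!\big(A(-D^2u)\big) \geq n(\det A \cdot \det(-D^2u))^{1/n}$ on the upper contact set where $-D^2 u \geq 0$. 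The technical heart of the argument is extracting from this a clean $L^n$ bound on the effective right-hand side—absorbing the contribution of the unbounded trace by pairing it with $\det(a_{ij}) \geq \lambda$—so that the ABP comparison closes in the non-uniformly elliptic setting.
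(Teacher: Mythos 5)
Your strategy is structurally the same as the paper's (translate to $x_0$, compare $v$ with a barrier on $\Omega \cap B_R$, let uniform convexity give height $\sim R^2$ for the barrier on $\Omega \cap \partial B_R$, and balance $R^\alpha$ against $r/R^2$ by choosing $R = r^{1/(\alpha+2)}$), but there is a genuine gap in the choice of barrier, and the proposed repair does not close it.

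The function $\psi(y) = |y - y_0|^2 - \rho_0^2$ is convex, so $L\Phi_R^{+} = (2MK/R^2)\,\mathrm{tr}(a_{ij}) \ge 0$ with the \emph{wrong} sign for an upper barrier: to conclude $v \le \Phi_R^+$ from ABP applied to $u := v - \Phi_R^+$ one needs control of $\|(Lu)^-\|_{L^n} = \|(g - L\Phi_R^+)^-\|_{L^n}$, and since $L\Phi_R^+ \ge 0$ this quantity is bounded below by $(2MK/R^2)\|\mathrm{tr}(a_{ij})\|_{L^n}$ up to the $g$-contribution. The hypothesis $\det(a_{ij}) \ge \lambda$ gives no control whatsoever on $\mathrm{tr}(a_{ij})$ (e.g.\ $\mathrm{diag}(M,1/M)$ in dimension 2 has determinant $1$ but trace $\sim M$), so this $L^n$ norm can be infinite. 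The AMGM step you invoke, $\mathrm{tr}(A(-D^2 u)) \ge n(\det A \cdot \det(-D^2u))^{1/n}$ on the upper contact set, simply reproduces the same quantity on the right-hand side: one arrives at $\int_{\Gamma^+}\det(-D^2u) \lesssim \lambda^{-1}\int_{\Gamma^+}\big((2MK/R^2)\mathrm{tr}(a) + g^-\big)^n$, and the $(\mathrm{tr}\,a)^n$ integral cannot be absorbed by $\det a \ge \lambda$. There is no clean $L^n$ bound to extract here, so the ABP comparison does not close.

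The paper avoids this entirely by using a \emph{linear} barrier. After normalizing so that $0 \in \partial\Omega$ and $\Omega \subset \{x_n > 0\}$, the barrier term is $C_1 (\inf\{y_n : y \in \overline{\Omega}\cap \partial B_{\delta_2}(0)\})^{-1} x_n$, a linear function annihilated by $L$. Thus $Lh_{\pm} = g$ exactly, and the only $L^n$-quantity entering ABP is $\|g\|_{L^n}$, which is controlled by hypothesis. Uniform convexity still does the geometric work, lower-bounding $x_n$ on $\overline\Omega \cap \partial B_{\delta_2}$ by $C_2^{-1}\delta_2^2$, which is exactly the $\psi \gtrsim R^2$ you wanted. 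So the fix is to drop the $|y|^2$ part of your $\psi$ and keep only its linear part in the inward normal direction; with that change, the rest of your outline (the boundary data split, the ABP comparison in $\Omega \cap B_R$, the choice $R = r^{1/(\alpha+2)}$) carries through and reproduces the paper's argument.
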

\begin{proof}
Without loss of generality, we assume that $K>1$, $\lambda =1$ and
$$\Omega\subset \R^{n}\cap \{x_{n}>0\},~0\in\p\Omega.$$
Take $x_{0} =0$. 
By the Aleksandrov-Bakelman-Pucci (ABP) estimate \cite{GT} and the assumption $\det (a_{ij})\geq 1$, we have
$$|v(x)|\leq \sup_{\Omega}\|\varphi\|_{L^{\infty}(\Omega)} + C_{n}diam (\Omega) \|g\|_{L^{n}(\Omega)}\leq C_{0}K~\forall~ x\in \Omega$$
and hence, for any $\varepsilon \in (0,1)$
\begin{equation}|v(x)-v(0)\pm \e|\leq 3C_{0}K:= C_{1}.
\label{gen-ineq}
\end{equation}
Consider now the functions
$$h_{\pm}(x) := v(x)- v(0)\pm \e\pm C_{1} (inf \{y_{n}: y\in \overline{\Omega}\cap\partial B_{\delta_{2}}(0)\})^{-1} x_{n}$$
in the region $$A:= \Omega\cap B_{\delta_{2}}(0)$$ where $\delta_{2}$ is small to be chosen later.\\
Note that, if $x\in\partial \Omega$ with $$|x|\leq \delta_{1}(\e):= (\e/L)^{1/\alpha}$$ then, we have
\begin{equation}
\label{bdr-ineq}|v(x)-v(0)| =|\varphi(x)-\varphi(0)| \leq L|x|^{\alpha} \leq \e.
\end{equation}
It follows that, if we choose $\delta_{2}\leq \delta_{1}$ then from (\ref{gen-ineq}) and (\ref{bdr-ineq}), we have
$$h_{-}\leq 0, h_{+}\geq 0~\text{on}~\partial A.$$
On the other hand,
$$Lh_{\pm}= g~\text{in}~A.$$
The ABP estimate applied in $A$ gives
$$h_{-}\leq  C_{n}diam (A) \|g\|_{L^{n}(A)}\leq C_{n}K\delta_{2}~\text{in}~ A$$
and $$
h_{+}\geq - C_{n}diam (A) \|g\|_{L^{n}(A)}\geq  -C_{n}K\delta_{2}~\text{in}~ A.$$
By restricting $\e\leq \left(\frac{L}{(C_{n}K)^{\alpha}}\right)^{\frac{1}{1-\alpha}}$, we can assume that
$$\delta_{1} = (\e/L)^{1/\alpha}\leq \frac{\e}{C_{n}K}.$$
Then, for $\delta_{2}\leq \delta_{1}$, we have $C_{n}K\delta_{2}\leq \e$ and thus, for all $x\in A$, we have
$$|v(x)-v(0)|\leq 2\e + C_{1} (inf \{y_{n}: y\in \overline{\Omega}\cap\partial B_{\delta_{2}}(0)\})^{-1} x_{n}.$$
The uniform convexity of $\Omega$ gives
\begin{equation}
inf \{y_{n}: y\in \overline{\Omega}\cap\partial B_{\delta_{2}}(0)\} \geq C_{2}^{-1}\delta^2_{2}.
\end{equation}
Therefore, choosing $\delta_{2}= \delta_{1}$, we obtain
$$|v(x)-v(0)|\leq 2\e + C_{1} (inf \{y_{n}: y\in \overline{\Omega}\cap\partial B_{\delta_{2}}(0)\})^{-1} x_{n}= 2\e + \frac{2C_{1}C_{2}}{\delta_{2}^2}x_{n}~\text{in}~ A.$$
As a consequence, we have just obtained the following inequality
\begin{equation}
\label{op-ineq}
|v(x)-v(0)|\leq 2\e + \frac{2C_{1}C_{2}}{\delta_{2}^2}|x| = 2\e + 2C_{1}C_{2}L^{2/\alpha}\e^{-2/\alpha}|x|
\end{equation}
for all $x,\e$ satisfying the following conditions
\begin{equation}
\label{xe-ineq}
|x|\leq \delta_{1}(\e):= (\e/L)^{1/\alpha}, \e\leq \left(\frac{L}{(C_{n}K)^{\alpha}}\right)^{\frac{1}{1-\alpha}}: = c_{1}(\alpha, L, K, n).
\end{equation}
Finally, let us choose 
$$\e = |x|^{\frac{\alpha}{\alpha + 2}}.$$
It satisfies the conditions in (\ref{xe-ineq}) if $c_{1}\geq |x|^{\frac{\alpha}{\alpha + 2}}\geq L|x|^{\alpha}$, or 
$$|x|\leq \min\{c_{1}^{\frac{\alpha +2}{\alpha}}, L^{-\frac{\alpha +2}{\alpha^2}}\}:=\delta.$$
Then, by (\ref{op-ineq}), we have for all $x\in \Omega\cap B_{\delta}(0)$
$$|v(x)-v(0)| \leq C|x|^{\frac{\alpha}{\alpha + 2}}$$
where $C= 2 + 2C_{1}C_{2}L^{2/\alpha}.$
\end{proof}

The above proposition, applied to $a_{ij}= U^{ij}$ where $U = (U^{ij})$ is the matrix of cofactors of the Hessian matrix $D^2 u$ of the convex function $u$ satisfying
$$\lambda\leq \det D^{2}u\leq \Lambda,$$
gives the boundary continuity for solutions to the linearized Monge-Amp\`ere equation
$$U^{ij} v_{ij} =g.$$ 
This combined with the interior H\"older continuity estimates of Caffarelli-Guti\'errez \cite{CG} gives the global H\"older estimates for solutions to the linearized Monge-Amp\`ere equations on uniformly convex domains. The proof follows the same lines as in the proofs of Theorem 2. 5 and Proposition 2.6 in \cite{LS1}. The rest of this section will be devoted to the proof of these global H\"older estimates.

\subsection{The Localization Theorem }
In this subsection, we state the main tool used in the proof of Theorem \ref{global-h}, the localization theorem.\\  Let $\Omega\subset \R^{n}$ be a bounded convex set with
\begin{equation}\label{om_ass}
B_\rho(\rho e_n) \subset \, \Omega \, \subset \{x_n \geq 0\} \cap B_{\frac 1\rho},
\end{equation}
for some small $\rho>0$. Assume that 
\begin{equation}
\Omega~ \text{contains an interior ball of radius $\rho$ tangent to}~ \p 
\Omega~ \text{at each point on} ~\p \Omega\cap\ B_\rho.
\label{tang-int}
\end{equation}
Let $u : \overline \Omega \rightarrow \R$, $u \in C^{0,1}(\overline 
\Omega) 
\cap 
C^2(\Omega)$  be a convex function satisfying
\begin{equation}\label{eq_u}
\det D^2u =f, \quad \quad 0 <\lambda \leq f \leq \Lambda \quad \text{in $\Omega$},
\end{equation} and 
assume that
\begin{equation}\label{0grad}
u(0)=0, \quad \nabla u(0)=0.
\end{equation}
Let $S_{h}(u)$ be the section of $u$ at $0$ with level $h$:
$$S_h := \{x \in \overline \Omega : \quad u(x) < h \}.$$

If the boundary data has quadratic growth near $\{x_n=0\}$ then, as $h \rightarrow 0$, $S_h$ is equivalent to a half-ellipsoid centered at 0. This is the content
of the Localization Theorem proved in \cite{S,S2}. Precisely, this theorem reads as follows.

\begin{thm}[Localization Theorem \cite{S,S2}]\label{main_loc}
 Assume that $\Omega$ satisfies \eqref{om_ass}-(\ref{tang-int}) and $u$ satisfies 
\eqref{eq_u}, 
\eqref{0grad} above and,
\begin{equation}\label{commentstar}\rho |x|^2 \leq u(x) \leq \rho^{-1} 
|x|^2 \quad \text{on $\p \Omega \cap \{x_n \leq \rho\}.$}\end{equation}
Then, for each $h<k$ there exists an ellipsoid $E_h$ of volume $\omega_{n}h^{n/2}$ 
such that
$$kE_h \cap \overline \Omega \, \subset \, S_h \, \subset \, k^{-1}E_h \cap \overline \Omega.$$

Moreover, the ellipsoid $E_h$ is obtained from the ball of radius $h^{1/2}$ by a
linear transformation $A_h^{-1}$ (sliding along the $x_n=0$ plane)
$$A_hE_h= h^{1/2}B_1,\quad \det A_{h} =1,$$
$$A_h(x) = x - \tau_h x_n, \quad \tau_h = (\tau_1, \tau_2, \ldots, 
\tau_{n-1}, 0), $$
with
$$ |\tau_{h}| \leq k^{-1} |\log h|.$$
The constant $k$ above depends only on $\rho, \lambda, \Lambda, n$.
\end{thm}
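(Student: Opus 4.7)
The plan is to prove the conclusion by normalizing the sections $S_h$ via volume-preserving affine maps and running a dyadic iteration on the scale $h$, controlling how the normalizing map changes from one dyadic level to the next. The result is due to Savin, and the scheme below mirrors his strategy.

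First, I would use the quadratic boundary control \eqref{commentstar} together with the interior-ball tangency (\ref{tang-int}) to show that, for small $h$, the trace $S_h \cap \p\Omega$ is comparable to an $(n-1)$-dimensional ball of radius $\sim h^{1/2}$ in the tangent directions at $0$. The Monge--Amp\`ere bound $\lambda \leq \det D^2 u \leq \Lambda$ together with standard volume estimates for Monge--Amp\`ere sections gives $|S_h| \sim h^{n/2}$. Consequently the average extent of $S_h$ in the $x_n$ direction is also $\sim h^{1/2}$. Applying John's lemma to the convex set $S_h$ produces an affine normalizing map; because $S_h$ rests on the tangent hyperplane $\{x_n=0\}$ at $0$ and its tangential footprint is already prescribed, one can arrange this map to have the special form $A_h(x) = x - \tau_h x_n$ with $\det A_h=1$ and $\tau_h\in\R^{n-1}\times\{0\}$, and so that $A_h(S_h)$ is comparable to $B_{h^{1/2}}(0)\cap\{x_n\geq 0\}$. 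This yields the first displayed inclusions.

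The heart of the matter is the dyadic iteration $h_j=2^{-j}$. The essential estimate to prove is a \emph{bounded change of shear between consecutive scales}:
$$|\tau_{h/2}-\tau_h| \leq C(\rho,\lambda,\Lambda,n).$$
To establish this, rescale the data by $\tilde u(y) = 2h^{-1}\, u\bigl((h/2)^{1/2} A_h^{-1} y\bigr)$; the rescaled $\tilde u$ satisfies a Monge--Amp\`ere equation with the same bounds $\lambda\leq \det D^2\tilde u\leq \Lambda$ on a domain of bounded geometry. In this normalized setting, one compares $\tilde u$ against sliding paraboloid barriers in the tangent directions, using the quadratic growth along $\p\Omega$ as a lower barrier, and uses ABP and maximum principle arguments to bound the tangential displacement of the center of mass of the next-scale section. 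Iterating from a base scale $h_0\sim \rho$ down to an arbitrary $h$ accumulates $O(|\log h|)$ bounded increments, giving $|\tau_h|\leq C|\log h|$, which, after absorbing constants into $k$, is exactly the stated bound $|\tau_h|\leq k^{-1}|\log h|$. Volume preservation $\det A_h=1$ is enforced at each step by construction, and the matching of $S_h$ with $k E_h$ and $k^{-1} E_h$ follows from the normalized inclusions at each dyadic level.

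The hard part will be this per-level shear estimate. The linearized Monge--Amp\`ere operator is degenerate near $\p\Omega$ (as emphasized in the excerpt, $U$ blows up logarithmically even when $\det D^2 u$ is constant), so one cannot invoke uniformly elliptic theory directly. Savin's approach is precisely to work in the normalized coordinates, where the renormalized sections sit inside a set of bounded geometry, and to construct barriers that are adapted to the tangential direction in which the section engages $\p\Omega$; a Pogorelov-type second derivative bound is also needed to keep the renormalizations nondegenerate across scales. All of this relies crucially on the \emph{exact} quadratic boundary growth in \eqref{commentstar}: weaker boundary growth would allow the sections to elongate pathologically and would destroy the logarithmic control on $|\tau_h|$.
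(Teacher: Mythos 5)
This statement is cited, not proved, in the paper: Theorem~\ref{main_loc} is attributed directly to Savin's papers \cite{S,S2}, and the present article never gives a proof (it only uses the theorem as a black box to control boundary sections of $u$). So there is no in-paper proof to compare your attempt against; the only meaningful comparison is with Savin's own argument.

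Your sketch does capture the broad architecture of Savin's proof: a volume-preserving normalization of the sections that is a shear along $\{x_n=0\}$, combined with a dyadic iteration in $h$ in which the key estimate is a bounded change of shear between consecutive scales, $|\tau_{h/2}-\tau_h|\le C$, so that the increments accumulate to $O(|\log h|)$. That is indeed the right skeleton, and you correctly identify the quadratic boundary separation \eqref{commentstar} as the structural hypothesis that prevents the sections from elongating pathologically. However, several details are either inaccurate or unsupported. First, the volume comparison $|S_h|\sim h^{n/2}$ for a \emph{boundary} section is not a ``standard'' fact one can import; it is intertwined with the quadratic separation and with the very normalization you are trying to build, and Savin has to establish it as part of the iteration rather than invoke it a priori. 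Second, applying John's lemma to $S_h$ does not automatically produce a map of the special sheared form $A_h(x)=x-\tau_h x_n$; the restriction to a pure shear is a nontrivial reduction that uses both the interior-ball condition \eqref{tang-int} at $0$ and the quadratic growth of the boundary data, and it has to be propagated through the iteration, not assumed. Third, the appeal to a ``Pogorelov-type second derivative bound'' is out of place: Savin's localization argument is carried out with barriers, the Alexandrov estimate, and compactness/normalization, and does not rest on a Pogorelov interior $C^{1,1}$ estimate (which in any case would require more regularity than the $L^\infty$ bounds on $\det D^2 u$ assumed here). Finally, the crucial per-level shear estimate is only gestured at (``sliding paraboloid barriers,'' ``tangential displacement of the center of mass''); as written it is not a checkable argument, and this is precisely the hard core of Savin's theorem. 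In short, the plan is directionally consistent with Savin's proof, but it is a summary rather than a proof, and the one genuinely delicate step is left unproved.
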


 The ellipsoid $E_h$, or equivalently the linear map $A_h$, 
provides useful information about the behavior of $u$ 
near the origin. From Theorem \ref{main_loc} we also control the shape of sections that are tangent to $\p \Omega$ at the origin. Before we state this result we introduce the notation for the section of $u$ centered at $x\in \overline \Omega$ at height $h$:
\begin{equation*}
 S_{x,h} (u) :=\{y\in \overline \Omega:  u(y) < u(x) + \nabla u(x) (y- x) +h\}.
\end{equation*}

\begin{prop}\label{tan_sec}
Let $u$ and $\Omega$ satisfy the hypotheses of the Localization Theorem \ref{main_loc} at the 
origin. Assume that for some $y \in \Omega$ the section $S_{y,h} \subset \Omega$
is tangent to $\p \Omega$ at $0$ for some $h \le c$ with $c$ universal. Then there exists a small 
 constant $k_0>0$ depending on $\lambda$, $\Lambda$, $\rho $ and $n$ such that
$$ \nabla u(y)=a e_n 
\quad \mbox{for some} \quad   a \in [k_0 h^{1/2}, k_0^{-1} h^{1/2}],$$
$$k_0 E_h \subset S_{y,h} -y\subset k_0^{-1} E_h, \quad \quad k_0 h^{1/2} \le dist(y,\p \Omega) \le k_0^{-1} h^{-1/2}, \quad $$
with $E_h$ the ellipsoid defined in the Localization Theorem \ref{main_loc}.
\end{prop}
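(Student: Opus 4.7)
The plan is to normalize the picture at the tangency point $0$ via the Localization Theorem and then extract the desired bounds by a compactness argument for normalized Monge-Amp\`ere solutions.

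\emph{Gradient direction.} The tangency of $S_{y,h}$ to $\partial\Omega$ at $0$ means that the supporting affine function $\ell(x):=u(y)+\nabla u(y)\cdot(x-y)+h$ of $\partial S_{y,h}$ at $0$ satisfies $\ell(0)=0$, and has gradient parallel to the outer normal of $\partial\Omega$ at $0$, which by \eqref{om_ass} is $-e_n$. Hence $\nabla u(y)=a\,e_n$ for some $a\in\R$. Combining $\ell(0)=0$ with $u(0)=0$ gives the key identity
\[ ay_n=u(y)+h, \]
and since $y_n>0$, $u(y)\ge u(0)=0$, and $h>0$, we conclude $a>0$.

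\emph{Normalization via Theorem \ref{main_loc}.} Apply the Localization Theorem at $0$ to produce $A_h$ and $E_h=h^{1/2}A_h^{-1}B_1$ with $kE_h\cap\overline\Omega\subset S_h(u)\subset k^{-1}E_h\cap\overline\Omega$, and rescale via
\[ \tilde x:=h^{-1/2}A_h x,\qquad \tilde u(\tilde x):=h^{-1}u(x),\qquad \tilde\Omega:=h^{-1/2}A_h\Omega. \]
Since $\det A_h=1$, we retain $\det D^2\tilde u\in[\lambda,\Lambda]$; since $A_h$ slides along $\{x_n=0\}$, both the tangent hyperplane at $0$ and the quadratic growth \eqref{commentstar} persist (up to universal constants); and by construction $\tilde S_1(\tilde u)$ is comparable to $B_1\cap\{\tilde x_n\ge 0\}$. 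Because $\tau_h\cdot e_n=0$, the vector $e_n$ and the $n$-th coordinate are preserved, so $\nabla\tilde u(\tilde y)=\tilde a\,e_n$ with $\tilde a=h^{-1/2}a$ and $\tilde y_n=h^{-1/2}y_n$; moreover $\tilde S_{\tilde y,1}(\tilde u)$ is still tangent to $\partial\tilde\Omega$ at $0$.

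\emph{Universal bounds in the normalized picture, and rescaling back.} The heart of the proof is to show that there is a universal constant $k_0>0$ such that $\tilde a,\tilde y_n\in[k_0,k_0^{-1}]$ and $k_0B_1\subset \tilde S_{\tilde y,1}(\tilde u)-\tilde y\subset k_0^{-1}B_1$. I would argue by compactness and contradiction: the family of $\tilde u$ satisfying the normalized hypotheses (uniform Monge-Amp\`ere bounds, normalized $\tilde S_1$, quadratic boundary growth, and a tangent section of height $1$ at $0$) is precompact in $C^0_{\mathrm{loc}}$, and Caffarelli's interior strict convexity together with Savin's boundary regularity \cite{S,S2} forces any limit $\tilde u_\infty$ to be strictly convex with a well-defined unique tangent section at $0$. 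Any degeneration ($\tilde a\to 0$ or $\infty$, or $\tilde y$ escaping the interior) would then contradict either the identity $\tilde a\tilde y_n=\tilde u(\tilde y)+1$, the normalization of $\tilde S_1$, or the strict convexity of the limit; John's lemma (or Caffarelli's theorem on sections) then yields the ball containment. Rescaling back via $y=h^{1/2}A_h^{-1}\tilde y$ and $a=h^{1/2}\tilde a$ converts these into $a\in[k_0h^{1/2},k_0^{-1}h^{1/2}]$ and $y_n\in[k_0h^{1/2},k_0^{-1}h^{1/2}]$; the distance estimate for $y$ follows because near $0$ the boundary $\partial\Omega$ is squeezed between $\{x_n=0\}$ and the sphere $\partial B_\rho(\rho e_n)$, so $\mathrm{dist}(y,\partial\Omega)$ is comparable to $y_n$. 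Finally,
\[ S_{y,h}(u)-y=h^{1/2}A_h^{-1}\big(\tilde S_{\tilde y,1}(\tilde u)-\tilde y\big) \]
is squeezed between $k_0\cdot h^{1/2}A_h^{-1}B_1=k_0E_h$ and $k_0^{-1}E_h$, completing the proof. The main obstacle is the compactness step in the normalized picture: one must use the full strength of the Localization Theorem (not just at $0$ but stably under the compactness passage) to guarantee boundary regularity strong enough to exclude degenerate limits.
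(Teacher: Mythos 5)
The paper does not contain its own proof of Proposition~\ref{tan_sec}: it states only that the proposition ``is a consequence of Theorem~\ref{main_loc} and was proved \cite{S3}.'' So there is no in-paper argument against which to compare. That said, your sketch is worth scrutinizing on its own terms.

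Your identification of the gradient direction and the scalar identity $a y_n = u(y)+h$ (equivalently $\tilde a\,\tilde y_n = \tilde u(\tilde y)+1$ after rescaling) is correct and is indeed the starting point of the argument in \cite{S3}. The trouble is in the compactness step, and it is not merely a matter of filling in routine detail. After rescaling by $A_h$, the Monge--Amp\`ere bounds persist because $\det A_h=1$, and $\tilde S_1$ is normalized to lie between two fixed balls --- both as you say. But the quadratic separation \eqref{commentstar} does \emph{not} persist with a universal constant: if $\rho|x|^2\le u(x)\le\rho^{-1}|x|^2$ on $\partial\Omega$ near $0$, then after the sliding map $x\mapsto h^{-1/2}A_h x$ the new constant in the lower bound degrades to roughly $\rho\,\|A_h\|^{-2}\sim\rho\,|\log h|^{-2}$ (and the upper bound degrades symmetrically). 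Likewise the rescaled domains $\tilde\Omega_h=h^{-1/2}A_h\Omega$ are not uniformly bounded, and the candidate point $\tilde y$ has no a priori bound before the very estimate you are trying to prove. So the family of ``normalized configurations'' you wish to pass to the limit over does not satisfy uniform hypotheses, and the compactness argument, as stated, can only yield constants that depend on $h$ through $|\log h|$, which is exactly what the proposition must avoid. To salvage the compactness route you would first have to prove a priori bounds on $\tilde y$ and on the rescaled boundary behavior from the identity $\tilde a\,\tilde y_n=\tilde u(\tilde y)+1$, from quadratic growth, and from volume/ABP bounds on $S_{y,h}$; but once you have those bounds you have essentially reproduced the direct quantitative argument of \cite{S3}, which shows $S_{y,h}\subset S_{Ch}(u)$ (and a matching inclusion from below) by working in the original coordinates with the ellipsoids $E_h$ rather than by normalizing and extracting limits. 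In short: the approach is plausible in spirit but the central compactness step does not go through as written because the relevant hypotheses are not scale-invariant under $A_h$.
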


Proposition \ref{tan_sec} is a consequence of Theorem \ref{main_loc} and was proved \cite{S3}. 
\subsection{Quadratic separation on the boundary}
The quadratic separation from tangent planes on the boundary for solutions to the Monge-Amp\`ere equation is a crucial assumption in the Localization Theorem \ref{main_loc}. This is the case for $u$ in Theorem \ref{global-h} as proved in \cite[Proposition 3.2]{S2}.
\begin{prop}
Let $u$ be as in Theorem \ref{global-h}. Then, on $\p \Omega$, 
$u$ separates quadratically from its tangent planes on $\p \Omega$. This means that
if $x_0 \in 
\p \Omega$ then
\begin{equation}
 \rho\abs{x-x_{0}}^2 \leq u(x)- u(x_{0})-\nabla u(x_{0}) (x- x_{0}) \leq 
\rho^{-1}\abs{x-x_{0}}^2,
\label{eq_u1}
\end{equation}
for all $x \in \p\Omega,$ for some small constant $\rho$ universal. 
\label{quadsep}
\end{prop}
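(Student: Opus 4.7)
The plan is to follow the barrier argument of Savin \cite[Proposition 3.2]{S2}: reduce to a normalized setup at $x_{0}$, obtain the upper bound from $C^{3}$ regularity of $\partial\Omega$ and $u|_{\partial\Omega}$ via Taylor expansion, and establish the more delicate lower bound by constructing an explicit convex Monge-Amp\`ere sub-barrier that exploits $\det D^{2}u \ge \lambda > 0$ together with the uniform convexity of $\partial\Omega$.

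\textbf{Normalization.} After translating and rotating $\R^{n}$, I may take $x_{0}=0$ with the inner unit normal to $\partial\Omega$ at $0$ equal to $e_{n}$. Since $u$ is convex and Lipschitz on $\overline\Omega$, the subdifferential $\partial u(0)$ is nonempty; picking $p\in \partial u(0)$ and replacing $u$ by $u-u(0)-p\cdot x$ preserves $\det D^{2}u$ as well as the $C^{3}$ regularity of $\partial\Omega$ and $u|_{\partial\Omega}$. This reduces the statement to
\begin{equation*}
\rho\,|x|^{2}\,\le\, u(x)\,\le\, \rho^{-1}|x|^{2}\qquad \text{for } x\in \partial\Omega\cap B_{\delta}(0),
\end{equation*}
under the extra hypotheses $u\ge 0$ on $\overline\Omega$ and $u(0)=0$, for some universal $\rho,\delta>0$.

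\textbf{Upper bound.} Uniform convexity and $\partial\Omega\in C^{3}$ allow writing $\partial\Omega$ near $0$ as the graph $x_{n}=g(x')$ with $g\in C^{3}$, $g(0)=0$, $\nabla g(0)=0$, and $D^{2}g(0)\ge cI$. Then $\tilde u(x'):= u(x',g(x'))$ is $C^{3}$ near $0\in\R^{n-1}$, nonnegative, and vanishes at $0$, so $\nabla \tilde u(0)=0$ and Taylor's theorem yields $\tilde u(x')\le C|x'|^{2}\le C|(x',g(x'))|^{2}$, which is the upper inequality on $\partial\Omega$ near $0$.

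\textbf{Lower bound.} Here the Monge-Amp\`ere bound is essential---without it even $C^{\infty}$ boundary data need not separate quadratically. The plan is to construct, in a small neighborhood $V$ of $0$ in $\overline\Omega$, an explicit convex sub-barrier $\underline w$ combining a quadratic tangential term with a suitable multiple of the defining function $x_{n}-g(x')$ of $\partial\Omega$, with parameters tuned so that (i) $\det D^{2}\underline w > \Lambda+1$ in $V$, (ii) $\underline w \le u$ on $\partial V$ (using ABP-type $L^{\infty}$ bounds on $u$ for the interior portion of $\partial V$ and the upper bound from the previous step for $\partial V\cap\partial\Omega$), and (iii) $\underline w(x)\ge \rho|x|^{2}$ on $\partial\Omega\cap V$. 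Alexandrov's comparison principle in $V$ then gives $\underline w\le u$ throughout $V$, and restricting to $\partial\Omega\cap V$ yields the quadratic lower bound on $u$. The hard part is calibrating the parameters so that (i)--(iii) all hold with constants depending only on universal data: the uniform convexity of $\partial\Omega$, which forces $g(x')\ge c|x'|^{2}$, is what couples the tangential quadratic growth of $\underline w$ to its values on the curved boundary and makes the three conditions simultaneously achievable. Since the construction is local at $x_{0}$ and translation-invariant along $\partial\Omega$, the resulting $\rho$ is universal.
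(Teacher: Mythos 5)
The paper does not prove this proposition at all; it simply cites Savin's \cite[Proposition 3.2]{S2}, so your attempt to give a self-contained argument is, in a sense, more ambitious than what the paper offers. Your normalization and upper-bound step (Taylor expansion of the $C^3$ restriction $\tilde u(x')=u(x',g(x'))$ at an interior minimum) are correct and standard. The problem is the lower bound.

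As described, your comparison argument is circular. To apply Aleksandrov's comparison principle in $V\cap\Omega$ with a subsolution $\underline w$ satisfying $\det D^2\underline w>\Lambda\ge\det D^2 u$, you must verify $\underline w\le u$ on \emph{all} of $\partial(V\cap\Omega)$, including the piece $\partial\Omega\cap \overline V$. But that is precisely where you want to \emph{conclude} $u\ge\underline w\ge\rho|x|^2$: conditions (ii) and (iii) together already assert $u\ge\underline w\ge\rho|x|^2$ on $\partial\Omega\cap V$ as a hypothesis, which is what you set out to prove. If instead you take $\underline w\le 0\le u$ on $\partial\Omega\cap V$ so that (ii) holds for free, then (iii) fails and the comparison yields nothing new on $\partial\Omega$. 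Furthermore, citing ``the upper bound from the previous step'' to check (ii) on $\partial V\cap\partial\Omega$ is the wrong direction: an \emph{upper} bound $u\le\rho^{-1}|x|^2$ cannot help establish $\underline w\le u$; you would need a \emph{lower} bound on $u$ there, which is the thing being proved.

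The lower bound genuinely does require the Monge--Amp\`ere bound, but not through a sub-barrier of the type you sketch. The mechanism is different: if $u$ failed to separate quadratically, the $C^3$ boundary data, the uniform convexity of $\partial\Omega$, and the convexity of $u$ would force $u$ to be very flat on an anomalously long, thin region near $0$; comparing this degenerate section with a quadratic \emph{super}solution (small determinant) then contradicts $\det D^2 u\ge\lambda$. In other words, the inequality to be violated is a volume/shape estimate for a sublevel set, not a pointwise barrier inequality on $\partial\Omega$. As written, your proposal does not contain this idea and the comparison step cannot be carried out; you should either adopt the degenerate-section contradiction argument or simply cite \cite[Proposition 3.2]{S2} as the paper does.
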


When $x_0 \in \p \Omega,$ the term $\nabla u(x_0)$ is understood in the 
sense that
$$ x_{n+1}=u(x_0)+\nabla u(x_0) \cdot (x-x_0) $$ is a supporting 
hyperplane for the graph of $u$ but for any $\eps >0$,
$$ x_{n+1}=u(x_0)+(\nabla u(x_0)- \eps \nu_{x_0}) \cdot (x-x_0) $$
is not a supporting hyperplane, where $\nu_{x_0}$ denotes the exterior 
unit normal to $\p \Omega$ at $x_0$. In fact we show in \cite[Proposition 4.1]{LS1}
that our hypotheses imply that $u$ is 
always differentiable at $x_0$ and then $\nabla u(x_0)$ is defined also in 
the classical sense.

\subsection{Global H\"older continuity for solutions of the linearized Monge-Amp\`ere equation} We are now ready to prove Theorem \ref{global-h}.
\begin{proof}[Proof of Theorem \ref{global-h}] We recall from Proposition \ref{quadsep} that $u$ separates quadratically from its tangent planes on $\p\Omega$. Therefore, Proposition \ref{tan_sec} applies.
Let $y\in \Omega $ with $$r:=dist (y,\partial\Omega) \le c,$$ for $c$ universal, and consider the maximal section $S_{y,\bar{h}(y)}$ centered at $y$, i.e.,
$$\bar{h}(y)=max\{h\,| \quad S_{y,h}\subset \Omega\}.$$
When it is clear from the context, we write $\bar{h}$ for $\bar{h}(y)$.
By Proposition \ref{tan_sec} applied at the point $$x_0\in \p S_{y,\bar h} \cap \p \Omega,$$ we have
 \begin{equation}\bar h^{1/2} \sim r,
\label{hr}
\end{equation}
and $S_{y,\bar h}$ is equivalent to an ellipsoid $E$ i.e
$$cE \subset S_{y, \bar h}-y \subset CE,$$
where
\begin{equation}E :=\bar h^{1/2}A_{\bar{h}}^{-1}B_1, \quad \mbox{with} \quad \|A_{\bar{h}}\|, \|A_{\bar h}^{-1} \| \le C |\log \bar h|; \det A_{\bar{h}}=1.
\label{eh}
\end{equation}
We denote $$u_y:=u-u(y)-\nabla u(y) (x-y).$$
The rescaling $\tilde u: \tilde S_1 \to \R$ of $u$ 
$$\tilde u(\tilde x):=\frac {1}{ \bar h} u_y(T \tilde x) \quad \quad x=T\tilde x:=y+\bar h^{1/2}A_{\bar{h}}^{-1}\tilde x,$$
satisfies
$$\det D^2\tilde u(\tilde x)=\tilde f(\tilde x):=f(T \tilde x),  $$
and
\begin{equation}
\label{normalsect}
B_c \subset \tilde S_1 \subset B_C, \quad \quad \tilde S_1=\bar h^{-1/2} A_{\bar h}(S_{y, \bar h}- y),
\end{equation}
where $\tilde S_1$ represents the section of $\tilde u$ at the origin at height 1.

We define also the rescaling $\tilde v$ for $v$
$$\tilde v(\tilde x):= v(T\tilde x)- v(x_{0}),\quad \tilde x\in \tilde S_{1}.$$
Then $\tilde v$ solves
$$\tilde U^{ij} \tilde v_{ij} = \tilde g(\tilde x):= \bar{h} g(T\tilde x).$$
Now, we apply Caffarelli-Guti\'errez's interior H\"older estimates \cite{CG, TW3} to $\tilde v $ to obtain
$$\abs{\tilde v (\tilde z_{1})-\tilde v(\tilde z_{2})}\leq C\abs{\tilde z_{1}-\tilde z_{2}}^{\beta} \{\norm{\tilde v }_{L^{\infty}(\tilde S_{1})} + \norm{\tilde g}_{L^{n}(\tilde S_{1})}\},\quad\forall \tilde z_{1}, \tilde z_{2}\in \tilde S_{1/2},$$
for some small constant $\beta\in (0,1)$ depending only on $n, \lambda, \Lambda$.\\
By (\ref{normalsect}), we can decrease $\beta$ if necessary and thus we can assume that
$$2\beta\leq \frac{\alpha}{\alpha + 2}: =2\gamma.$$ Note that, by (\ref{eh})
$$ \norm{\tilde g}_{L^{n}(\tilde S_{1})} = \bar{h}^{1/2}\norm{g}_{L^{n}(S_{y, \bar{h}})}.$$
We observe that (\ref{hr}) and (\ref{eh}) give
$$B_{C r\abs{log r}}(y)\supset S_{y,\bar{h}} \supset S_{y,\bar{h}/2}\supset B_{c\frac{r}{\abs{log r}}}(y)$$
and
$$diam (S_{y,\bar{h}})\leq Cr\abs{log r}.$$
By Proposition \ref{global-holder}, we have
$$\norm{\tilde v }_{L^{\infty}(\tilde S_{1})} \leq C diam (S_{y, \bar{h}})^{2\gamma} \leq C (r\abs{\log r})^{2\gamma}.$$
Hence
$$\abs{\tilde v (\tilde z_{1})-\tilde v(\tilde z_{2})}\leq C\abs{\tilde z_{1}-\tilde z_{2}}^{\beta}\{(r\abs{\log r})^{2\gamma}  + \bar{h}^{1/2}\norm{g}_{L^{n}(S_{y, \bar{h}})}\}~\forall \tilde z_{1}, \tilde z_{2}\in \tilde S_{1/2}.$$
 Rescaling back and using
$$\tilde z_1-\tilde z_2=\bar h^{-1/2}A_{\bar h}(z_1-z_2),$$
and the fact that
$$\abs{\tilde z_1-\tilde z_2}\leq \norm{\bar h^{-1/2}A_{\bar h}}\abs{z_1-z_2} \leq C \bar{h}^{-1/2}\abs{\log \bar{h}}\abs{z_1-z_2}\leq
C r^{-1}\abs{log r}\abs{z_1-z_2},$$
we find
\begin{equation}|v(z_1)-v( z_2)|  \le  |z_1-z_2|^{\beta} \quad \forall  z_1, z_2 \in  S_{y,\bar h/2} .
\label{oscv}
\end{equation}
Notice that this inequality holds also in the Euclidean ball $B_{c\frac{r}{\abs{log r}}}(y)\subset S_{y,\bar h/2}$. Combining this with Proposition \ref{global-holder}, we easily obtain that $$[v]_{C^\beta(\bar \Omega)} \le C,$$ for some $\beta\in (0,1)$, $C$ universal.\\ For completeness, we include the details. By rescaling the domain, we can assume that
$$\Omega\subset B_{1/100}(0).$$
We estimate, for $x$ and $y$ in $\Omega$
$$\frac{\abs{v(x)-v (y)}}{\abs{x-y}^{\beta}}.$$
Let $r_{x} = dist(x, \p\Omega)$ and $r_{y}= dist (y, \p\Omega).$ Suppose that $r_{y}\leq r_{x},$ say, and take $x_{0}\in\p\Omega$ and $ y_{0}\in \p\Omega$ such that $r_{x}= \abs{x- x_{0}}$ and $r_{y} = \abs{y-y_{0}}.$ From the interior H\"older estimates of Caffarelli-Guti\'errez, we only need to consider the case $r_{y}\leq r_{x}\leq c.$\\
Assume first that
$$\abs{x-y}\leq c \frac{r_{x}}{\abs{log r_{x}}}.$$
Then $y\in B_{c \frac{r_{x}}{\abs{log r_{x}}}}(x)\subset S_{x,\bar{h}(x)/2}.$ By (\ref{oscv}), we have
$$\frac{\abs{v(x)-v (y)}}{\abs{x-y}^{\beta}}\leq 1.$$
Assume finally that
\begin{equation*}
\label{bigxy}
\abs{x-y}\geq c \frac{r_{x}}{\abs{log r_{x}}}.
\end{equation*}
We claim that 
$$r_{x}\leq C\abs{x-y}\abs{log \abs{x-y}}.$$
Indeed, if 
$$1>r_{x}\geq \abs{x-y}\abs{log \abs{x-y}}\geq \abs{x-y} $$
then
$$r_{x}\leq \frac{1}{c}\abs{x-y}\abs{log r_{x}}\leq \frac{1}{c}\abs{x-y}\abs{log \abs{x-y}}.$$
Now, we have
$$\abs{x_0-y_0}\leq r_{x} + \abs{x-y} + r_{y}\leq C \abs{x-y}\abs{log \abs{x-y}}.$$
Hence, by Proposition \ref{global-holder} and recalling $2\gamma =\frac{\alpha}{\alpha + 2},$
\begin{eqnarray*}\abs{v(x)-v (y)}&\leq& \abs{v(x)- v(x_{0})} + \abs{v(x_{0})- v(y_{0})} + \abs{v(y_{0})- v(y)}\\ & \leq& C \left(r_{x}^{2\gamma} + \abs{x_{0}- y_{0}}^{\alpha} + r_{y}^{2\gamma}\right) \\ &\leq& C\left(\abs{x-y}\abs{log \abs{x-y}}\right)^{2\gamma}\leq C \abs{x-y}^{\beta}.
\end{eqnarray*}

\end{proof}
\section{Global second derivative estimates for the prescribed affine mean curvature equation}
\label{global-AMCE}
In this section, we prove Theorem \ref{mainthm}.
Let $u\in C^{4}(\overline{\Omega})$ be the uniformly convex solution of (\ref{AMCE})-(\ref{SBV}).
We first establish bounds on the determinant $\det D^2 u$ via those of $w$. 
\begin{lem}
There exists a constant $C>0$ depending only on $n, p, \theta, \Omega$, $\norm{f}_{L^{p}(\Omega)}$, $\norm{\psi}_{W^{2,p}(\Omega)}$, and $\inf_{\Omega} \psi$ such that any uniformly convex solution $u\in C^{4}(\overline{\Omega})$ of (\ref{AMCE})-(\ref{SBV}) satisfies
\begin{equation}
\label{wbound}
C^{-1}\leq w\leq C.
\end{equation}
\label{wbound-lem}
\end{lem}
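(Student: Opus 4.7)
The plan is to use the boundary data $w=\psi$ on $\p\Omega$ together with the equation (\ref{AMCE}) to propagate the bounds into the interior. The lower bound will come from the classical weak maximum principle for the linearized Monge--Amp\`ere operator, while the upper bound will require an Aleksandrov--Bakelman--Pucci (ABP) estimate combined with the algebraic relation between $\det U$ and $w$.

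\smallskip

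\noindent\emph{Lower bound.} The uniform convexity of $u$ makes $U=(U^{ij})$ positive definite, so $L_u := U^{ij}\p_i\p_j$ is a linear, non-degenerate elliptic operator with no lower-order terms. Rewriting (\ref{AMCE}) as $L_u(-w)=-f\ge 0$ and invoking the classical weak maximum principle yields
$$\inf_\Omega w \;\ge\; \inf_{\p\Omega} w \;=\; \inf_{\p\Omega}\psi \;\ge\; \inf_\Omega \psi \;>\; 0.$$

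\smallskip

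\noindent\emph{Upper bound.} I would apply the ABP estimate to $v:=w-\max_{\p\Omega}\psi$, which satisfies $v\le 0$ on $\p\Omega$ and $L_u v=f$ with $f\le 0$, obtaining
$$\sup_\Omega v \;\le\; C_n\,\mathrm{diam}(\Omega)\left(\int_\Omega \frac{\abs{f}^n}{\det U}\right)^{1/n}.$$
The key algebraic step is to combine $\det U=(\det D^2 u)^{n-1}$ with the defining relation $w=(\det D^2 u)^{\theta-1}$ to get $(\det U)^{-1}=w^{(n-1)/(1-\theta)}$. Applying H\"older's inequality with conjugate exponents $p/n$ and $p/(p-n)$ (legitimate because $p>n$) and setting $M:=\sup_\Omega w$, the previous inequality collapses to the self-improving estimate
$$M \;\le\; \max_{\p\Omega}\psi \;+\; C_\ast\, M^{\beta}, \qquad \beta := \frac{n-1}{n(1-\theta)},$$
where $C_\ast$ depends only on $n,p,\theta,\Omega$ and $\norm{f}_{L^p(\Omega)}$. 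The hypothesis $\theta<1/n$ is precisely what makes $\beta<1$; a standard absorption (if $M^{1-\beta}\ge 2C_\ast$, then $C_\ast M^\beta\le M/2$ and the inequality forces $M\le 2\max\psi$) then yields a universal bound $M\le C$.

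\smallskip

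\noindent\emph{Main obstacle.} The principal difficulty is that $L_u$ is not uniformly elliptic: neither the eigenvalues nor the determinant of $U$ admits any a priori lower bound, so a direct ABP application is useless on its own. The trick is to turn this degeneracy into an asset by using the Monge--Amp\`ere relation to express $(\det U)^{-1}$ as a power of $w$ itself, creating the self-improving feedback above; the threshold $\theta<1/n$ is exactly the condition that makes the resulting feedback exponent sub-critical, and everything else (H\"older and absorption) is routine.
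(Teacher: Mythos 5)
Your proof is correct and follows essentially the same route as the paper: the lower bound comes from the weak maximum principle applied to $L_u w = f \le 0$, and the upper bound from the ABP estimate combined with the identity $(\det U)^{-1/n} = w^{(n-1)/(n(1-\theta))}$, closing the resulting self-improving inequality via the sub-critical exponent $\frac{n-1}{n(1-\theta)}<1$ guaranteed by $\theta<1/n$. Your explicit H\"older step from $L^p$ to $L^n$ and the spelled-out absorption are minor amplifications of the paper's argument, not departures from it.
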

\begin{proof}
Because $f\leq 0$, by the maximum principle, $w$ attains its minimum on the boundary. Since $\inf_{\Omega}\psi>0$, we obtain the first inequality in (\ref{wbound}).\\
 For the upper bound of $w$, we use (\ref{wu}) in the form $\det D^{2} u = w^{\frac{1}{\theta-1}}$ and the Aleksandrov-Bakelman-Pucci estimate. By this estimate, we have
\begin{eqnarray*}
\norm{w}_{L^{\infty}(\Omega)}& \leq& \norm{\psi}_{L^{\infty}(\p\Omega)} + C\norm{\frac{f}{(\det U)^{1/n}}}_{L^{n}(\Omega)}\\
&= & \norm{\psi}_{L^{\infty}(\p\Omega)} +  C\norm{\frac{f}{(\det D^2u)^{\frac{n-1}{n}}}}_{L^{n}(\Omega)}\\
&=& \norm{\psi}_{L^{\infty}(\p\Omega)}  + C\norm{f w^{\frac{n-1}{n(1-\theta)}}}_{L^{n}(\Omega)}\\
&\leq& \norm{\psi}_{L^{\infty}(\p\Omega)} + C\norm{w}^{\frac{n-1}{n(1-\theta)}}_{L^{\infty}(\Omega)}\norm{f}_{L^{n}(\Omega)}.
\end{eqnarray*}
Because $\theta<1/n$, we have 
$$\frac{n-1}{n(1-\theta)}<1$$
and the second inequality in (\ref{wbound}) follows.
\end{proof}
We are now ready to complete the proof of Theorem \ref{mainthm}.
\begin{proof}[Proof of theorem \ref{mainthm}] By Lemma \ref{wbound-lem}, 
$$C^{-1}\leq \det D^2 u \leq C.$$
Note that, by (\ref{AMCE}), $w$ is the solution to the linearized Monge-Amp\`ere equation $U^{ij}w_{ij} =f$ with boundary data $w=\psi.$ Because $\psi\in W^{2,p}(\Omega)$ with $p>n$, $\psi$ is clearly H\"older continuous on $\partial\Omega$. 
Thus, by Theorem \ref{global-h}, $w$ is H\"older continuous up the boundary. Rewriting (\ref{wu}) as
$$\det D^2 u = w ^{\frac{1}{\theta-1}},$$
and noticing $u=\varphi$ on $\p\Omega$ where $\varphi\in C^{3}(\overline{\Omega})$, we obtain
$u\in C^{2,\alpha}(\overline{\Omega})$ \cite[Theorem 1.1]{TW2}. Thus (\ref{AMCE}) is a uniformly elliptic, second order partial differential equations in $w$. Hence $w\in W^{2,p}(\Omega)$ and in turn $u\in W^{4, p}(\Omega)$ with desired estimate
\begin{equation}
\norm{u}_{W^{4,p}(\Omega)}\leq C,
\end{equation}
where $C$ depends on $n, p, \theta, \p\Omega, \Omega$, $\norm{f}_{L^{p}(\Omega)}, \norm{\varphi}_{W^{4,p}(\Omega)}, \norm{\psi}_{W^{2,p}(\Omega)}$, and $\inf_{\Omega} \psi.$
\end{proof}

\end{document}